\theoremstyle{plain}
\newtheorem{prop}[section]{Proposition}
\theoremstyle{definition} \theoremstyle{procedure}
\newtheorem{rem}[section]{Remark}
\newtheorem{defn}[section]{Definition}
\newtheorem{exmp}[section]{Example}
\newtheorem{alg}{Algorithm}
\begin{document}
\title{The F4 Algorithm For Euclidean Rings}
\author{Afshan Sadiq$^*$}
\address{*Abdus Salam School of Mathematical Sciences, GC University, Lahore, Pakistan}
\email{afshanatiq@gmail.com}

\begin{abstract}
This short note is the generalization of Faug\'{e}re F4-algorithm for polynomial rings with coefficients in Euclidean rings. This algorithm computes successively a Gr\"{o}bner basis replacing the reduction of one single s-polynomial in Buchberger's algorithm by the  simultaneous reduction of several polynomials.
\end{abstract}

\maketitle
The concept of Gr\"{o}bner Bases was introduced by Bruno Buchberger (1965) in the context
of his work on performing algorithmic computations in residue classes of polynomial rings.
Buchberger’s algorithm for computing Gr\"{o}bner Bases is a powerful tool for solving many
important problems in polynomial ideal theory. To compute Gr\"{o}bner bases efficiently there are many methods like FGLM, Gr\"{o}bner walk $($cf. \cite{DJ}$)$. Fauger\'{e} \cite{JC} introduced a new efficient method F4 algorithm to compute Gr\"{o}bner bases using linear algebra, and a selection strategy among the critical pairs which occurs in the computation of Gr\"{o}bner basis.
This algorithm computes successively a Gr\"{o}bner basis replacing the reduction of one single s-polynomial in Buchberger's algorithm by the  simultaneous reduction of several polynomials. In the book of Adams and Loustaunau $($cf. \cite{WP}$)$ the concept of Gr\"{o}bner bases over polynomial rings with coefficients in a ring is developed. The aim of this short note is to show that Faug\'{e}re's F4-algorithm works also in polynomial rings with coefficients in Euclidean rings.
\\ \\Let $>$ be a fixed global monomial ordering and R be an Euclidean ring.
\begin{defn}
We fix the following notations, writing $f\in R[x_1, \ldots , x_n]$, $f\neq 0$, in a unique way as a sum of non-zero terms
$$f=a_{\alpha_1} x^{\alpha_1} +a_{\alpha_2} x^{\alpha_2} +\ldots +a_{\alpha_s} x^{\alpha_s} , \, \,\,\,\,\,\,\,\,\, x^{\alpha_1} > x^{\alpha_2} > \ldots  > x^{\alpha_s},$$ and $a_{\alpha_1},a_{\alpha_2} ,\ldots ,a_{\alpha_s} \in R$. We call:
\begin{itemize}
\item[1.]$LM(f):=x^{\alpha_1}$, the leading  monomial of $f$,
\item[2.]$LE(f):={\alpha_1}$, the leading  exponent of $f$,
\item[3.]$LT(f):=a_{\alpha_1} x^{\alpha_1}$, the leading  term of  $f$,
\item[4.]$LC(f):=a_{\alpha_1}$, the leading  monomial of $f$,
\item[5.] We define the leading monomial and the leading term of $0$ to be $0$, and $0$ to be smaller than any monomial.
\item [6.] Let $G \subset R[x_1, \ldots , x_n]$, then $L(G):=\langle \{LT(g)\,| \, g\in G\}\rangle_{R[x_1, \ldots , x_n]}$ is the leading ideal of $G$.
\end{itemize}

\end{defn}

\begin{defn}
Let $I\subset R[x_1, \ldots , x_n]$ be an ideal. A finite set $G \subset R[x_1, \ldots , x_n]$ is called a Gr\"{o}bner basis of $I$ with respect to $>$ if $G \subset I$, and $L(I)=L(G)$.
\end{defn}

\begin{defn}
Let $H\subset R[x_1,\ldots, x_n]$ be finite set. $H$ is called interreduced if for all $p\neq q$, $p,q \in H, LT(p)$ does not divide $LT(q)$. Furthermore if $LM(p)| LM(q)$ then the remainder $LC(q)$ mod $LC(p)$ of $LC(q)$ with respect to the division of $LC(q)$ by $LC(p)$ in the Euclidean ring is $LC(q)$.
\end{defn}

The existence of a set of interreduced generators of $\langle H \rangle$ is given by the following algorithm.

\begin{alg}$Interreduce(H)$
\begin{itemize}
\item[Input]: $H$ a set of polynomials.
\item[Output]: $L$ a set of interreduced polynomials such that $\langle H\rangle=\langle L\rangle$.
\\ $todo = 1$
\item while $($ there exist $h, k \in H, h\neq k, LM(k)|LM(h)$ and $todo = 1)$
\\ \indent $todo = 0$
\item  \indent \qquad if $(LT(k)|LT(h))$
\\ \indent \qquad $h:=h - \frac{LT(h)}{LT(k)} k$;
\\ \indent \qquad $todo = 1$
\\  \indent \qquad if $(h=0)$
\\  \indent \quad  \qquad $H:=H\backslash \{h\}$;
 \item \indent \qquad  else
\\  \indent \qquad  if $(LC(k)$ mod $LC(h))\neq 0$;
\\   \indent \quad  \qquad  compute $c=gcd(LC(h), LC(k))=a LC(h) + b LC(k)$;
 \\   \indent  \quad \qquad if $(a$ is a unit $)$
\\  \indent \qquad \qquad $h:=a h+b \frac{LM(h)}{LM(k)} k$ ;
\\  \indent \qquad \qquad $todo = 1$
\\   \indent \quad  \qquad if $ (LC(h)$ mod $LC(k)\neq LC(h) )$;
\\  \indent \qquad  \qquad  $h:=h+ \frac{LC(h)\, mod\, LC(k)-LC(h)}{LC(k)} \cdot \frac{LM(h)}{LM(k)} \cdot k$;
\\  \indent \qquad \qquad $todo = 1$
 \item return$(H)$;
\end{itemize}
\end{alg}

\begin{alg}$F4$
\begin{itemize}
\item[Input]: $G$ set of polynomials, $S$ a selection strategy\footnote{A selection strategy $S$ associates to the pair set $P$ a subset $S(P)\subset P$. A trivial example is $S(P)=P$. In the case that $G$ is a set of homogeneous polynomial $S(P)=\{(f,g)\in P \, | \, deg(lcm(LM(f),LM(g)))$ is minimal $\}$ is a good choice.}.
\item[Output]: Gr\"{o}bner basis for $\langle G\rangle$.
 \item $G=Interreduce(G)$;
 \item $P:=\{(f,g)\, | \, f,g \in G\}$;
 \item $while(P\neq \emptyset)$
\\   \indent \quad $H:=\{\frac{cx^\alpha}{LT(g)}\cdot g,\,\, \frac{cx^\alpha}{LT(f)}\cdot f \, |\, (f,g)\in S(P), \, cx^\alpha =lcm(LT(f),LT(g))\}$;
\\ \indent \quad $P:=P\backslash S(P)$;
\\ \indent \quad $H:=Interreduce(H\cup G)$;
\\ \indent \quad $P:=P\cup \{(f,h)\, |\, f\in G, \, h\in H, LT(h)\notin \langle LT(g)\, |\, g\in G\rangle \}$;
\\ \indent \quad $G:=H$;
 \item return $(G)$;
\end{itemize}
\end{alg}

\begin{prop}
The algorithm $F4$ terminates and the result is a Gr\"{o}bner basis of the ideal generated by input.
\end{prop}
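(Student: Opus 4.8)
The plan is to follow the classical termination-and-correctness proofs of Buchberger's algorithm, and of Faug\`ere's F4 over a field, keeping track of the two new features over a Euclidean ring: leading coefficients now live in $R$, so one must control divisibility of leading coefficients and not merely of leading monomials; and the gcd/B\'ezout branch of the subroutine Interreduce plays the role of the extra critical elements (the ``G-polynomials'') in the version of Buchberger's criterion that is valid over coefficient rings, cf.\ \cite{WP}. Two facts are used throughout: since $R$ is Euclidean it is a principal ideal domain, hence Noetherian, so $R[x_1,\ldots,x_n]$ is Noetherian by the Hilbert Basis Theorem; and every operation of F4 preserves the generated ideal --- Interreduce by its specification, and the replacement of $G$ by $\mathrm{Interreduce}(H\cup G)$ because each element of $H$ is an $R[x_1,\ldots,x_n]$-multiple of an element of $G$ --- so the returned set generates the same ideal $I$ as the input.

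For termination I would first prove the lemma that none of the elementary operations used by the algorithm can shrink the leading ideal of the current set. Adjoining a multiple $\frac{cx^\alpha}{LT(g)}g$ of a present element plainly cannot. For Interreduce one argues one step at a time. In the step $h\mapsto h-\frac{LT(h)}{LT(k)}k$ the leading term of the old $h$ equals $\frac{LT(h)}{LT(k)}LT(k)\in\langle LT(k)\rangle$. In the B\'ezout step $h\mapsto a h+b\frac{LM(h)}{LM(k)}k$, with $aLC(h)+bLC(k)=\gcd(LC(h),LC(k))$ and $a$ a unit, the new $h$ has leading term $\gcd(LC(h),LC(k))\,x^{LE(h)}$, which still divides $LT(h)$ in $R[x_1,\ldots,x_n]$. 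In the step $h\mapsto h+\frac{LC(h)\bmod LC(k)-LC(h)}{LC(k)}\cdot\frac{LM(h)}{LM(k)}\cdot k$ the new $h$ has leading term $(LC(h)\bmod LC(k))\,x^{LE(h)}$, and since $LC(h)\in\langle LC(h)\bmod LC(k),\,LC(k)\rangle$ in $R$, the leading term of the old $h$ lies in the ideal generated by the leading term of the new $h$ together with $\frac{LM(h)}{LM(k)}LT(k)$. In every case the leading term of the element that leaves the set is recovered in the leading ideal of what remains. Hence the leading ideals $L(G)$ produced in successive passes of the main loop form an ascending chain, which stabilizes. But once $L(G)$ is stable the instruction $P:=P\cup\{(f,h)\mid f\in G,\ h\in H,\ LT(h)\notin\langle LT(g)\mid g\in G\rangle\}$ adds nothing, since every $h\in H$ satisfies $LT(h)\in L(H)=L(G)$; from then on $P$ loses the nonempty set $S(P)$ at each pass and gains nothing, so the loop halts.

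To see that the output $G'$ is a Gr\"obner basis of $I$ it remains, by the definition, to prove $L(I)=L(G')$, and for this I would invoke the criterion of \cite{WP}: a finite generating set $G'$ of $I$ is a Gr\"obner basis as soon as, for every pair $f,g\in G'$, the S-polynomial and the accompanying gcd-critical polynomial of $(f,g)$ both reduce to $0$ modulo $G'$. The job of the main loop is to perform exactly these reductions: the set $H$ associated with a pair $(f,g)$ contains the two multiples $\frac{cx^\alpha}{LT(f)}f$ and $\frac{cx^\alpha}{LT(g)}g$ with common leading term $cx^\alpha=lcm(LT(f),LT(g))$, whose difference is the S-polynomial, while the B\'ezout branch of Interreduce applied between these multiples and the elements of $G$ whose leading monomial divides $x^\alpha$ produces the gcd-critical polynomial; because the loop stopped without creating a new leading monomial, all of these are reduced into $L(G')$, i.e.\ to $0$ after interreduction.

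The real obstacle, I expect, is the bookkeeping underlying the last paragraph: one must check that at termination \emph{every} pair of elements of the final basis has in effect been treated in this way --- including the subtle point that, when Interreduce is run on $H\cup G$, the two multiples arising from a critical pair must be combined with each other rather than merely cancelled against $G$, which constrains the order in which Interreduce should take its elementary steps --- even though $G$ is replaced by a different interreduced set at each pass, so that the pairs stored in $P$ refer to elements of earlier versions of $G$. Concretely one has to show that the rule ``add $(f,h)$ whenever $LT(h)\notin\langle LT(g)\mid g\in G\rangle$'' together with the interreducedness of the successive bases leaves no S- or gcd-critical obstruction of the final basis unresolved --- the Euclidean-ring counterpart of the correctness of the critical-pair handling in F4, now carrying a second family of critical elements. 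Most of the work should lie there; the algebraic identities above are essentially routine.
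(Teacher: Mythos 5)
Your proposal takes the same route as the paper --- termination via the ascending chain of leading ideals in the Noetherian ring $R[x_1,\ldots,x_n]$, correctness via a Buchberger-type criterion applied to the two multiples $E=\frac{cx^\alpha}{LT(f)}f$ and $F=\frac{cx^\alpha}{LT(g)}g$ whose difference is the S-polynomial --- but you fill in two points at which the paper's two-sentence proof is too terse. The paper asserts that each pass properly enlarges $L(G)$ without checking that Interreduce cannot \emph{shrink} the leading ideal; your case analysis of the three elementary steps (subtraction, B\'ezout combination with $a$ a unit, remainder step) is precisely the missing lemma, and your observation that once $L(G)$ stabilizes the instruction enlarging $P$ becomes vacuous, so that $P$ strictly decreases thereafter, is the correct way to finish the termination argument. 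Likewise, the paper appeals to the field version of Buchberger's criterion via \cite{GP1}, whereas over a Euclidean ring one needs the criterion of \cite{WP} (or \cite{HM}), in which the gcd-critical polynomials must also be accounted for; your identification of the B\'ezout branch of Interreduce as the mechanism that discharges them is the right reading of the algorithm.

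The ``bookkeeping'' you flag at the end is a genuine gap, but it is one your proposal shares with the paper rather than one it introduces: the pairs stored in $P$ refer to elements of superseded intermediate bases, and $G$ is overwritten by a freshly interreduced set at every pass, so one must still argue that every S- and gcd-critical obstruction of the \emph{final} basis has been processed; the paper disposes of this with the single clause that $E$ and $F$ ``have been interreduced with elements from $G$.'' Your version is essentially the published argument with its soft spots made explicit, which is the more useful form of it.
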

\begin{proof}
The termination is a consequence of the ring $R[x_1, \ldots, x_n]$ being noetherian. Each time the set $G$ is enlarged the leading ideal $L(G)$ is enlarged properly. This has to stop after finitely many steps.
\\ The result $G$ is a Gr\"{o}bner basis of the ideal generated by the input because it satisfies Buchberger's criterion $($cf. \cite{GP1}$)$. Nmely the normal form of the $s$-polynomial of any two elements of $G$ with respect to $G$ is zero. This holds because for $f, g\in G$, $E=\frac{cx^\alpha}{LT(f)}\cdot f$ and $F=\frac{cx^\alpha}{LT(g)}\cdot g$ with $cx^\alpha = lcm(LT(f),LT(g))$ have been interreduced with elements from $G$ and $spoly(f,g)=E-F$.
\end{proof}

\begin{exmp}
Let $R=\mathbb{Z}$ and consider the ideal $$I=\langle 2abcd-2,abc+2abd+acd+bcd,ab+bc+ad+cd,a+b+c+d\rangle $$ in $ \mathbb{Z}[a,b,c,d]$ and $>$ the degrerlex ordering.
 \\In a test-implementation of the $F4$ and Buchberger's algorithm in SINGULAR, in $F4$, $91$ additions of polynomials are needed while
the Buchberger's algorithm needs $375$ additions.
\\ And we get $G=\{
-2,
 a+b+c+d,
 -b^2+d^2,
 -b c^2+b c d+c^2 d,
  b c d^2+c d^3,
 -c d^4\}$ a Gr\"{o}bner basis of $I$.
\end{exmp}
\begin{rem}
We presented here the idea of Faug\'{e}re algorithm in its simplest form to make the principle understandable. For an implementation one should keep the pair set as small as possible using Buchberger's criterion $($ chain criterion, product criterion, $($cf. \cite{GP1}$)$.
\end{rem}

\noindent {\bf Acknowledgments.} Thanks to my PhD supervisor Prof Dr Gerhard Pfister for his constant support and
 valuable suggestions.


\begin{thebibliography}{10}
\bibitem {ATG} Alessandro Giovini, Teo Moran, Gianfranco Niesi, Lorenzo Robbiano, Carlo Traverse. One sugar cube, please, or Selection strategies in the Buchberger algorithm, in: S.M. Watt (Ed.), Proc. 1991 Internat. Symp. on Symbolic and Algebraic computation, ISSAC' 91, ACM, New York, 1991.
\bibitem {DJ} David A.Cox, John Little, Donal O'Shea: Using Algebraic geometry. Springer, 2nd ed., 2004.
\bibitem {GP1} G.M. Greuel and G.Pfister: A SINGULAR Introduction to Commutative Algebra, 2nd ed., Springer, 2008.
\bibitem {GP2} G.M. Greuel, G.Pfister and H.Sch\"{o}nemann: SINGULAR - A Computer Algebra System for Polynomial Computations. Free software under GNU General Public Licence (1990-to date).
\bibitem {JC} Jean-Charles Faug\'{e}re: A new efficient algorithm for computing Gr\"{o}bner bases (F4). Journal of Pure and Applied Algebra 139 (1999), 61-88.
\bibitem {HM} H.M.M\"{o}ller: On the construction of Gr\"{o}bner bases using syzygies. J.Symb.Comp. 6 (1988), 345-359.
\bibitem {WP} W.W.Adams and P.Loustaunau: An Introduction to Gr\"{o}bner bases. Graduate studies in mathematics, vol. 3, American Mathematical Scociety, 2003.

\end{thebibliography}
\end{document}